\newtheorem{definition}{Definition}[section]
\newtheorem{theorem}[definition]{Theorem}
\newtheorem{lemma}[definition]{Lemma}
\newcommand{\variable}{\mathsf{Var}}
\newcommand{\logick}{\mathbf{K}}
\newcommand{\propgl}{\mathbf{GL}}
\newcommand{\predgl}{\mathbf{QGL}}
\newcommand{\mppa}{\mathbf{QPL}(\logicpa)}
\newcommand{\predlogic}[1]{\mathbf{MQ}_{\mathcal{#1}}}
\newcommand{\proplogic}{\mathbf{MP}}
\newcommand{\logicpa}{\mathsf{PA}}
\newcommand{\lk}{\mathsf{LK}}
\newcommand{\glsys}{\mathsf{NQGL}}
\newcommand{\glsysc}{\mathsf{NQGL}^{-}}
\newcommand{\framefi}{\mathfrak{FI}}
\newcommand{\framecw}{\mathfrak{CW}}
\newcommand{\framebl}{\mathfrak{BL}}
\newcommand{\kmodel}[1]{{#1}}
\newcommand{\iand}{\bigwedge}
\newcommand{\thn}{\ \Rightarrow\ }
\newcommand{\eq}{\ \Leftrightarrow\ }
\newcommand{\yy}{\rightarrow}
\newcommand{\gldb}{\db_{\glsys}}
\newcommand{\gldbc}{\db_{\glsysc}}
\newcommand{\db}{\vdash}
\newcommand{\vl}{\models}
\newcommand{\gm}{\Gamma}
\newcommand{\dl}{\Delta}
\newcommand{\ld}{\Lambda}
\newcommand{\xx}{\Xi}
\newcommand{\predb}{\mathrm{BF}}
\newcommand{\barcan}{\forall x\Box\phi\supset\Box\forall x\phi}
\title{A cut-free proof system for a predicate extension of the logic of provability}
\author{Yoshihito Tanaka\\ 
Kyushu Sangyo University\\ 
{\tt ytanaka@ip.kyusan-u.ac.jp}}
\date{}
\begin{document}

\maketitle

\begin{abstract}
In this paper, we introduce a proof system $\glsys$
for a Kripke complete predicate extension of the logic 
$\propgl$, that is, the logic of provability, which is defined by $\logick$ and 
the L\"{o}b formula $\Box(\Box p\supset p)\supset \Box p$.  
$\glsys$ is a modal extension of Gentzen's sequent calculus $\lk$. 
Although the propositional fragment of $\glsys$ axiomatizes $\propgl$, 
it does not have the  L\"{o}b formula as its axiom. 
Instead, it has a non-compact rule, 
that is, a derivation rule with countably many premises. 
We show that $\glsys$ enjoys cut admissibility and is 
complete with respect to the class of Kripke 
frames such that for each world, the supremum of the length of 
the paths from the world is finite. 
\end{abstract}

\section{Introduction}

In this paper, we introduce a cut-free proof system for a 
Kripke complete predicate extension of $\propgl$, where  
$\propgl$ is 
a propositional normal modal logic
defined by $\logick$ and the L\"{o}b formula 
\begin{equation}\label{glaxiom}
\Box(\Box p\supset p)\supset \Box p.   
\end{equation}
$\propgl$ is well-known as the logic of provability, in the sense that 
a propositional modal formula $\phi$ is in $\propgl$ if 
and only if $f(\phi)$ is provable in the Peano arithmetic $\logicpa$
for every arithmetical interpretation $f$ (e.g. \cite{bls93}).

A Kripke frame $(W,R)$ is said to be {\em conversely well-founded}, if there 
exists no countably infinite list 
$(w_{i})_{i\in\mathbb{N}}$ of elements of $W$ which satisfies
$(w_{i},w_{i+1})\in R$ for any $i\in\mathbb{N}$, 
and is said to be {\em of bounded length}, if for any $w\in W$ 
the supremum of the length of the lists 
$w_{0},w_{1},\ldots,w_{n}$ which satisfy $(w_{i},w_{i+1})\in R$ and $w_{0}=w$ 
is finite. 
We write
$\framefi$, $\framebl$, and $\framecw$ for the classes of transitive Kripke frames 
which are finite and irreflexive, of bounded length, and conversely well-founded, respectively.  
For any class $C$ of Kripke frames, 
we write $\proplogic(C)$ and $\predlogic{}(C)$ for 
the sets of propositional modal formulas and 
predicate modal formulas 
which are valid in $C$, respectively. 
It is known (e.g. \cite{bls93}) that 
$$
\propgl=\proplogic(\framefi)=\proplogic(\framecw).
$$
Therefore, 
\begin{equation}\label{eq:propgl}
\propgl=\proplogic(\framebl). 
\end{equation}

However, the situation in predicate extensions of $\propgl$ is 
not so clear. 
Let $\predgl$ be the smallest predicate normal modal logic which includes 
$\propgl$ as its propositional fragment. 
Let 
$\mppa$ be the set of predicate modal formulas defined by  
$$
\mppa
=
\{
\phi\mid
\text{
$\logicpa\db f(\phi)$ for every interpretation $f$
}
\}.
$$

\noindent
It is shown in \cite{mnt84}  
that $\predgl\subsetneqq\predlogic{}(\framecw)$ and 
$\predgl$ is incomplete with respect to any classes of Kripke frames. 
It is also proved in \cite{mnt84} that 
$\predgl\subsetneqq\mppa$, 
that is,   
$\predgl$ is arithmetically incomplete, 
and $\mppa\not\subseteqq\predlogic{}(\framefi)$.
Subsequently, 
\cite{art-dzh90} 
shows that if a closed predicate modal formula $\phi$ is 
not valid in a finite irreflexive Kripke model with finite domains
then there exists an interpretation 
$f$ such that $\logicpa\not\db f(\phi)$.
To summarize these results, we have the following:

\begin{equation}\label{map}
\begin{array}{ccl}
\cline{1-1}
\multicolumn{1}{|c|}{}
&
\subseteqq
&
\predlogic{}(\text{$\framefi$ with finite domains})\\
\multicolumn{1}{|c|}{}
&
&
\hspace{5pt}
\rotatebox[origin=c]{90}{$\subsetneqq$}\\
\multicolumn{1}{|c|}{}
&
\not\subseteqq
&
\predlogic{}(\framefi)\\
\multicolumn{1}{|c|}{}
&
&
\hspace{5pt}
\rotatebox[origin=c]{90}{$\subseteqq$}\\
\multicolumn{1}{|c|}{\mppa}
&
&
\predlogic{}(\framebl)\\
\multicolumn{1}{|c|}{}
&
&
\hspace{5pt}
\rotatebox[origin=c]{90}{$\subseteqq$}\\
\multicolumn{1}{|c|}{}
&
&
\predlogic{}(\framecw)\\
\multicolumn{1}{|c|}{}
&
&
\hspace{5pt}
\rotatebox[origin=c]{90}{$\subsetneqq$}\\
\multicolumn{1}{|c|}{}
&
\supsetneqq
&
\predgl\\
\cline{1-1}
\end{array}.
\end{equation}

On the other hand, 
\cite{yvr01}
introduces a logic $\predgl^{b}$, a predicate extension of $\propgl$, 
in which all occurrences of individual variables 
in a scope of a modal operator are considered to be bound, and 
$$
\Box\phi\yy\Box\forall x\phi
$$
is an axiom schema. 
It is proved in \cite{yvr01} 
that $\predgl^{b}$ is both arithmetically complete and Kripke complete
with respect to $\framefi$, under the above restriction in the construction of formulas.

In \cite{lvn81}, a sequent system for $\propgl$ is introduced, of which 
modal rule is  
\begin{eqnarray} \label{glrule}
\frac
{\Box\gm,\gm, \Box\phi\yy\phi}
{\Box\gm\yy\Box\phi}.
\end{eqnarray}
A proof of the cut-elimination theorem of the system is given 
in \cite{vln83} by a syntactic method, and 
a semantic proof of it is given in \cite{avr84}.  
It is also proved in \cite{avr84}
that the simple predicate extension of the system does not 
admit cut-elimination. 
While a sequent of the above sequent system is defined to be a pair of sets of formulas, 
\cite{gor-rmn08} gives a translation of the argument in \cite{vln83} 
to a sequent system built from multisets. 
A cut-free proof system for $\predgl^{b}$ is introduced in \cite{sch-trl14}.

Though 
none of 
$\predlogic{}(\framecw)$,
$\predlogic{}(\framebl)$, nor
$\predlogic{}(\framefi)$ are 
arithmetically complete as described in (\ref{map}), 
it could be of some interest as a problem of pure modal logic to 
give a cut-free proof system for a Kripke complete predicate extension of $\propgl$ without any 
restriction in the construction of formulas. 
In this paper, we introduce a proof system $\glsys$, 
which is a modal extension of 
Gentzen's sequent calculus $\lk$ 
for predicate logic,
and show the admissibility of the cut-rule and 
Kripke completeness with respect to $\framebl$.
From the Kripke completeness, it follows by (\ref{eq:propgl}) that 
the propositional fragment of $\glsys$ axiomatizes $\propgl$, 
but $\glsys$ 
does not include (\ref{glaxiom}) nor (\ref{glrule}) as an axiom schema or a 
derivation rule, respectively. 
Instead, it 
has a non-compact rule, that is,
a derivation rule with countably many premises. 
In \cite{gld93} and \cite{sgb94}, 
a general theory for model existence theorem for propositional modal logic with 
non-compact rules is given,  
also, in \cite{tnknoncpt}, for their predicate extension with 
Barcan formula
$$
\predb=\barcan.
$$
It follows immediately as a corollary of the main theorem of \cite{tnknoncpt}, 
that the system defined by $\glsys$ and $\predb$ is 
Kripke complete with respect to $\framebl$ with constant domains.  
However, it is shown in \cite{mnt84} that
$\predb$ is not $\logicpa$-valid.    
Therefore, we do not add $\predb$.

The outline of the paper is the following:
In Section \ref{syntax_semantics}, we give basic definitions for 
syntax and semantics. 
In Section \ref{rules}, we introduce the system $\glsys$. 
In Section \ref{consistent_pair}, the notions of finitely consistent pairs and saturated 
pairs are introduced. 
In Section \ref{completeness}, we show Kripke 
completeness of $\glsys$ with respect to $\framebl$, as well as 
the admissibility of the cut-rule.

\section{Preliminaries}\label{syntax_semantics}

The language we consider consists of the 
following symbols:
\begin{enumerate} 
\item
a countable set $\mathcal{V}$ of variables;

\item
$\top$ and $\bot$;

\item logical connectives: 
$\land$,  $\neg$, $\supset$; 

\item
quantifier:
$\forall$;

\item
for each $n\in\mathbb{N}$, 
countably many predicate symbols 
$P$, $Q$, $R$, $\cdots$ of arity $n$;

\item
modal operator
$\Box$.
\end{enumerate}

\noindent
The set $\Phi(\mathcal{V})$ of formulas over
$\mathcal{V}$ is the smallest set which satisfies:

\begin{enumerate}
\item
$\top$ and $\bot$ are in $\Phi(\mathcal{V})$;

\item 
if $P$ is a predicate symbol of arity $n$ and 
$x_{1},\ldots,x_{n}$ are variables in $\mathcal{V}$
then
$P(x_{1},\ldots,x_{n})$ is in $\Phi(\mathcal{V})$;

\item 
if $\phi$ and $\psi$ are in $\Phi(\mathcal{V})$
then $(\phi\land\psi)$ 
and $(\phi\supset\psi)$ are in $\Phi(\mathcal{V})$;

\item 
if $\phi\in\Phi(\mathcal{V})$ 
then $(\neg\phi)$ and 
$(\Box\phi)$  are in $\Phi(\mathcal{V})$;

\item
if $\phi\in\Phi(\mathcal{V})$ 
and $x\in\mathcal{V}$
then $(\forall x\phi)\in\Phi(\mathcal{V})$. 
\end{enumerate}

\noindent
As usual,  $\lor$ and $\exists$ are the duals of $\land$ and $\forall$, respectively.  
The symbol $\Diamond$ is an abbreviation of $\neg\Box\neg$, and 
for each $n\in\mathbb{N}$,  
$\Box^{n}$ and $\Diamond^{n}$ denote $n$-times applications of $\Box$ and $\Diamond$, 
respectively.  
For each set $S$ of formulas, we write $\Box S$ and $\Box^{-1}S$ for the sets
$$
\Box S=\{\Box\phi\mid\phi\in S\},\ \ 
\Box^{-1} S=\{\phi\mid\Box\phi\in S\}
$$
of formulas, respectively. 
For each formula $\phi$, 
we write $\variable(\phi)$ for the set of variables which have some 
free or bound occurrences in $\phi$.  For each set $S$ of formulas,  
$\variable(S)$ denotes the set $\bigcup_{\phi\in S}\variable(\phi)$. 
For each subset $\mathcal{U}$ of $\mathcal{V}$, 
$$
\Phi(\mathcal{U})=\{\phi\in\Phi(\mathcal{V})\mid\variable(\phi)\subseteqq\mathcal{U}\}. 
$$

A {\em Kripke frame} 
is a pair $(W,R)$, 
where $W$ is a non-empty set and 
$R$ is a binary relation on $W$. 
A {\em system of domains} over a frame $F=(W,R)$
is a family $D=(D_{w})_{w\in W}$ of non-empty sets 
such that for all $w_{1}$ and $w_{2}$ in $W$, 
$$
(w_{1},w_{2})\in R \thn D_{w_{1}}\subseteqq D_{w_{2}}. 
$$
A {\em predicate Kripke frame} over $F=(W,R)$
is a triple $(W,R,D)$, where 
$D$ is a system of domains over $F$. 
A {\em Kripke model}
is a four tuple $(W,R,D,I)$, 
where $(W,R,D)$ is a predicate Kripke frame
and
$I$ is a mapping 
called an {\em interpretation}
which maps each pair $(w,P)$, where $w$ is a member of $W$ and 
$P$ is a $n$-ary predicate symbol, to an $n$-ary relation
$I(w,P)\subseteqq (D_{w})^{n}$ over $D_{w}$. 
The relation $\vl$ among a Kripke model $\kmodel{M}=(W,R,D,I)$, 
a world $w\in W$, and 
a closed formula $\phi$ 
is defined inductively as follows:
\begin{enumerate}
\item
$\kmodel{M}, w\vl \top$, 
$\kmodel{M}, w\not\vl \bot$; 
\item 
for any predicate $P$ of arity $n$,\\
\noindent
$\kmodel{M}, w\vl P(d_{1},\ldots,d_{n})$
$\eq$
$(d_{1},\ldots,d_{n})\in I(w,P)$;
\item 
$\kmodel{M}, w\vl\phi\land\psi$ 
$\eq$
$\kmodel{M}, w\vl\phi$ and $\kmodel{M}, w\vl\psi$;
\item 
$\kmodel{M}, w\vl\phi\supset\psi$ 
$\eq$ 
$\kmodel{M},w\not\vl\phi$ or $\kmodel{M}, w\vl\psi$;
\item 
$\kmodel{M}, w\vl\neg\phi$ 
$\eq$ 
$\kmodel{M},w\not\vl\phi$;
\item 
$\kmodel{M}, w\vl\forall x\phi$ 
$\eq$ 
$\kmodel{M}, w\vl \phi[d/x]$ for any $d\in D_{w}$;
\item 
$\kmodel{M}, w\vl\Box\phi$ 
$\eq$
$(w,w')\in R$ implies 
$\kmodel{M}, w'\vl\phi$ for any $w'$ in $W$.
\end{enumerate}

Validity of a non-closed formula 
is defined by the validity of the universal closure of it.  
Let $\phi$ be a formula. 
If every world $w$ in a Kripke model $\kmodel{M}$ satisfies  
$\kmodel{M},w\vl\phi$, 
we write $\kmodel{M}\vl\phi$.  
If every Kripke model $\kmodel{M}$ over a frame $F$
satisfies 
$\kmodel{M}\vl\phi$, 
we write $F\vl \phi$. 
If every $F$ in a class $C$ of Kripke frames
satisfies 
$F\vl\phi$, 
we write $C\vl \phi$. 
The following lemma holds immediately:

\begin{lemma}\label{boundedness}
For any Kripke model $\kmodel{M}=(W,R,D,I)$, the underlying frame 
$(W,R)$ is of bounded length if and only if 
for any $w\in  W$ there exists some $n\in\mathbb{N}$ such that
$M,w\vl\neg\Diamond^{n}\top$. 
\end{lemma}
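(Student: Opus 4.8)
The plan is to reduce the biconditional to a world-by-world statement and prove the core fact that $\Diamond^{n}\top$ exactly captures the existence of an $R$-path of length $n$. First I would establish, by induction on $n$, that for every world $w$ we have $\kmodel{M},w\vl\Diamond^{n}\top$ if and only if there is a list $w_{0},w_{1},\ldots,w_{n}$ with $w_{0}=w$ and $(w_{i},w_{i+1})\in R$ for each $i<n$. The base case $n=0$ is immediate, since $\Diamond^{0}\top=\top$ holds everywhere and the trivial list $w_{0}=w$ always exists. For the inductive step, unfolding the abbreviation $\Diamond=\neg\Box\neg$ shows that $\kmodel{M},w\vl\Diamond^{n+1}\top$ holds iff there is some $w'$ with $(w,w')\in R$ and $\kmodel{M},w'\vl\Diamond^{n}\top$; applying the induction hypothesis to $w'$ and prepending $w$ to the resulting path yields the desired list of length $n+1$, and conversely every such list splits as an edge $(w,w_{1})$ followed by a path from $w_{1}$.

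Taking contrapositives, $\kmodel{M},w\vl\neg\Diamond^{n}\top$ holds precisely when there is no $R$-path of length $n$ issuing from $w$. I would then record the monotonicity in $n$: if there is no path of length $n$ from $w$, then there is none of any length $m\geq n$, because any initial segment of an $R$-path is again an $R$-path, so a path of length $m$ starting at $w$ would contain one of length $n$ starting at $w$. This reduces the right-hand side of the lemma, world by world, to the assertion that the set of lengths of $R$-paths issuing from $w$ is bounded.

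With these pieces in place the equivalence is finished separately at each $w$, and no transitivity of $R$ is needed. If $(W,R)$ is of bounded length, then by definition the supremum $N_{w}$ of the lengths of $R$-paths from $w$ is finite, so there is no path of length $N_{w}+1$, whence $\kmodel{M},w\vl\neg\Diamond^{N_{w}+1}\top$ supplies the required $n$. Conversely, if $\kmodel{M},w\vl\neg\Diamond^{n}\top$ for some $n$, then by the monotonicity step every $R$-path from $w$ has length strictly less than $n$, so the supremum of these lengths is at most $n-1$ and in particular finite. Since $w$ was arbitrary, quantifying over all $w\in W$ on both sides yields the stated biconditional.

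The argument is essentially routine; the only point demanding care is the monotonicity step, where one must note that ``no path of length $n$'' already rules out all longer paths, so that a single witness $n$ certifies boundedness rather than merely the absence of one particular length. A minor bookkeeping issue is the convention for the length of a list $w_{0},\ldots,w_{n}$ (the number of edges $n$ versus the number of worlds $n+1$), which only shifts the witnessing index by one and does not affect the equivalence.
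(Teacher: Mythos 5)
Your proof is correct, and the paper itself gives no argument for this lemma (it is stated as holding ``immediately''), so your routine unfolding --- the induction showing $\kmodel{M},w\vl\Diamond^{n}\top$ iff there is an $R$-path of length $n$ from $w$, plus the monotonicity observation --- is exactly the standard reasoning the paper leaves implicit. Nothing further is needed.
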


\section{Non-compact proof system for predicate extension of the logic of 
provability}
\label{rules}

In this section, we introduce a proof system  $\glsys$
for a predicate extension of $\propgl$. 
The proof system $\glsys$ is 
a variant of Gentzen-style sequent calculus.  
A sequent $\gm\yy\dl$ is 
defined to be a pair of finite sets 
$\gm$ and $\dl$ of formulas. 
The axiom schemta of $\glsys$ are 
$p\yy p$, $\yy\top$, $\bot\yy$, and 
the derivation rules of $\glsys$ are the following: 

\begin{description}
\item[Set]
$$
\frac
{\gm\yy\dl}
{\gm'\yy\dl'}
\hspace{10pt}
(\text{{\it where $\gm\subseteqq\gm'$ and  $\dl\subseteqq\dl'$}})
$$

\item[Cut]
$$
\frac
{\gm\yy\dl,\phi \hspace{10pt} \phi,\ld\yy\xx}
{\gm,\ld\yy\dl,\xx}
$$

\item[Conjunction]
$$
\frac
{\gm\yy\dl,\phi \hspace{10pt} \gm\yy\dl,\psi} 
{\gm\yy\dl,\phi\land\psi}
\hspace{20pt} 
\frac
{\phi,\gm\yy\dl} 
{\phi\land\psi,\gm\yy\dl}
\hspace{20pt} 
\frac
{\psi,\gm\yy\dl}
{\phi\land\psi,\gm\yy\dl}
$$

\item[Implication]
$$
\frac
{\phi,\gm\yy\dl,\psi}
{\gm\yy\dl,\phi\supset\psi}
\hspace{20pt}
\frac
{\gm\yy\dl,\phi \hspace{10pt} \psi,\ld\yy\xx} 
{\phi\supset\psi,\gm,\ld\yy\dl,\xx}
$$

\item[Negation]
$$
\frac
{\phi,\gm\yy\dl}
{\gm\yy\dl,\neg\phi}
\hspace{20pt} 
\frac
{\gm\yy\dl,\phi} 
{\neg\phi,\gm\yy\dl}
$$

\item[For all]
$$
\frac
{\gm\yy\dl,\phi[y/x]}
{\gm\yy\dl,\forall x\phi}
\hspace{20pt} 
\frac
{\phi[z/x],\gm\yy\dl} 
{\forall x\phi,\gm\yy\dl}
$$
{\em Here, $y$ is a variable in $\mathcal{V}$
which does not occur in any formulas in the lower sequent, 
and $z$ is any variable in $\mathcal{V}$.}

\item[Box]
$$
\frac
{\Box\gm,\dl\yy\phi}
{\Box\gm,\Box\dl\yy\Box\phi}
$$

\item[Boundedness of length]
$$
\frac
{\gm\yy\dl,\Diamond^{n}\top\hspace{10pt}
(\text{for any $n\in\mathbb{N}$})}
{\gm\yy\dl}
$$
{\em Here, the set of upper sequents is countably infinite. }

\end{description}

\bigskip

For any sequent $\gm\yy\dl$, we write $\gldb\gm\yy\dl$ if it is derivable in 
$\glsys$.  
A formula $\phi$ is said to be derivable in $\glsys$, 
if $\gldb\yy\phi$. If this is the case, we write $\gldb\phi$.
It is easy to see that 
the rule $\mathbf{Box}$ is equivalent to 
$\Box p\supset \Box\Box p$ plus standard necessitation rule 
$$
\frac
{\gm\yy\phi}
{\Box\gm\yy\Box\phi}.
$$

\noindent
The rule {\bf Boundedness of length} denotes that 
\begin{eqnarray}\label{algbarcan}
\iand_{n\in\mathbb{N}}\Diamond^{n}1=0
\end{eqnarray}
holds in the Lindenbaum algebra of the logic defined by $\glsys$. 
Note that if a Boolean algebra with operators satisfies (\ref{algbarcan}), 
the following equation holds in it, either: 
$$
\iand_{n\in\mathbb{N}}\Box\Diamond^{n}1=\Box 0. 
$$

\begin{theorem} \label{soundness}
(Soundness of $\glsys$). 
If $\gldb\phi$, then $\framebl\models\phi$, 
for any formula $\phi$. 
\end{theorem}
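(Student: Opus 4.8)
The plan is to prove the stronger statement that every \emph{derivable sequent} is $\framebl$-valid, by induction on the height of its derivation in $\glsys$; the theorem then follows by instantiating $\gm=\emptyset$ and $\dl=\{\phi\}$. First I would fix the semantic reading of a sequent. For a model $M=(W,R,D,I)$ over a frame in $\framebl$, a world $w$, and an assignment $\sigma$ sending the free variables of $\gm\cup\dl$ into $D_{w}$ (interpreting free variables as domain elements, in the manner of the satisfaction definition), I call $\gm\yy\dl$ \emph{satisfied at} $(M,w,\sigma)$ if $M,w\models_{\sigma}\gamma$ for every $\gamma\in\gm$ implies $M,w\models_{\sigma}\delta$ for some $\delta\in\dl$; the sequent is $\framebl$-valid if it is satisfied at every such $(M,w,\sigma)$. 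Because the system of domains is monotone along $R$, any $\sigma$ into $D_{w}$ is also a legitimate assignment at every $R$-successor of $w$, a fact I will use repeatedly.

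The base cases $p\yy p$, $\yy\top$, $\bot\yy$ are immediate, and the rules \textbf{Set}, \textbf{Cut}, \textbf{Conjunction}, \textbf{Implication}, and \textbf{Negation} are the familiar $\lk$ rules, whose soundness is checked worldwise and assignmentwise exactly as in classical predicate logic, with no modal ingredient beyond the remark on monotone domains. For the quantifier rules I would argue as usual: the right rule uses the eigenvariable condition (the fresh $y$ may range over all of $D_{w}$, which yields $\forall x\phi$), while the left rule is justified by instantiating the universally quantified assignment at the value chosen for $z$.

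The \textbf{Box} rule is the first genuinely modal case. Assuming the premise $\Box\gm,\dl\yy\phi$ is valid, suppose toward a contradiction that at some $(M,w,\sigma)$ all of $\Box\gm$ and $\Box\dl$ hold while $M,w\not\models_{\sigma}\Box\phi$. Then there is $w'$ with $(w,w')\in R$ and $M,w'\not\models_{\sigma}\phi$. Since every frame in $\framebl$ is \emph{transitive}, each $R$-successor of $w'$ is an $R$-successor of $w$, so each $\Box\gamma$ with $\gamma\in\gm$ still holds at $w'$; and from $\Box\dl$ at $w$, each $\delta\in\dl$ holds at $w'$. Thus the antecedent of the premise is satisfied at $(M,w',\sigma)$, forcing $M,w'\models_{\sigma}\phi$, a contradiction. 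Here transitivity is exactly what licenses carrying the boxed antecedent across the edge.

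The crucial and, I expect, most delicate case is the non-compact rule \textbf{Boundedness of length}. Suppose each premise $\gm\yy\dl,\Diamond^{n}\top$ is valid but $\gm\yy\dl$ fails at some $(M,w,\sigma)$ whose underlying frame lies in $\framebl$; so every $\gamma\in\gm$ holds and no $\delta\in\dl$ holds at $(M,w,\sigma)$. Because the frame is of bounded length, Lemma~\ref{boundedness} supplies an $n\in\mathbb{N}$ with $M,w\models\neg\Diamond^{n}\top$, i.e.\ $M,w\not\models\Diamond^{n}\top$ (a closed formula, so $\sigma$ is irrelevant). Then at $(M,w,\sigma)$ the antecedent of the $n$-th premise holds while none of $\dl,\Diamond^{n}\top$ does, contradicting its validity. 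The main obstacle is precisely this step, since it is where the restriction to $\framebl$ is used essentially: the argument goes through only because Lemma~\ref{boundedness} converts bounded length into the semantic falsity of some $\Diamond^{n}\top$ at every world. Finally, taking $\gm=\emptyset$ and $\dl=\{\phi\}$ shows that $\gldb\phi$ yields satisfaction of $\phi$ under every assignment at every world, which is the validity of its universal closure, i.e.\ $\framebl\models\phi$.
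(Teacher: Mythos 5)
Your proof is correct. Note that the paper states Theorem \ref{soundness} without proof, so there is nothing to compare against; the induction on derivations you give is exactly the argument the author leaves to the reader. You correctly isolate the two points where the choice of $\framebl$ matters: transitivity (together with monotonicity of the domains) is what lets the boxed context survive the passage to a successor in the \textbf{Box} case, and Lemma \ref{boundedness} is what makes the infinitary \textbf{Boundedness of length} rule sound, since on a frame of bounded length some premise $\gm\yy\dl,\Diamond^{n}\top$ must already fail wherever the conclusion would. Your move to an assignment-based reading of sequents with free variables is also the right way to reconcile the induction with the paper's closed-formula semantics and its universal-closure convention.
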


\section{Finitely consistent pairs and saturated pairs}\label{consistent_pair}
In this section, we introduce some notions which are used
to show the Kripke completeness and 
the admissibility of the cut-rule. 
We write $\glsysc$ for the cut-free fragment of $\glsys$, and 
$\gldbc\gm\yy\dl$ 
if a sequent $\gm\yy\dl$ is derivable in $\glsysc$.

\begin{definition}
A pair $(S,T)$ of sets of formulas is said to be 
{\em finitely consistent} if 
for any finite sets $S'\subseteqq S$ and $T'\subseteqq T$,
$$
\not\gldbc S'\yy T'. 
$$
\end{definition}

\begin{definition}\label{saturated_pair}
Let $\mathcal{U}$ be a set of variables. 
A finitely consistent pair $(S,T)$ of subsets of $\Phi(\mathcal{U})$
is said to be {\em $\mathcal{U}$-saturated}, 
if the following conditions are satisfied:
\begin{enumerate}
\item
If $\phi_{1}\land\phi_{2}\in S$, then 
$\phi_{1}$, $\phi_{2}\in S$, and 
if $\phi_{1}\land\phi_{2}\in T$, then 
either $\phi_{1}\in T$ or $\phi_{2}\in T$. 

\item
If $\phi_{1}\supset\phi_{2}\in S$, then 
either $\phi_{1}\in T$ or $\phi_{2}\in S$, 
and 
if $\phi_{1}\supset\phi_{2}\in T$, then 
$\phi_{1}\in S$ and $\phi_{2}\in T$. 

\item
If $\neg\phi\in S$, then 
$\phi\in T$, 
and 
if $\neg\phi\in T$, then 
$\phi\in S$.

\item
If $\forall x\phi\in S$, then 
$\phi[z/x]\in S$ for all $z\in\mathcal{U}$, 
and 
if $\forall x\phi\in T$, then 
$\phi[z/x]\in T$ for some $z\in\mathcal{U}$. 

\end{enumerate}
\end{definition}

\begin{definition}
A finitely consistent pair $(S,T)$ of formulas is called a {\em $\propgl$-pair}, 
if $\Box\neg\Diamond^{n}\top\in S$ for some $n\in\mathbb{N}$. 
\end{definition}

\begin{theorem}\label{s-saturated}
Let $\mathcal{U}$ be a coinfinite subset of $\mathcal{V}$. 
Suppose $(S,T)$ is a finitely consistent pair of subsets of $\Phi(\mathcal{U})$. 
Then, 
there exists a coinfinite subset $\mathcal{U}'$ of $\mathcal{V}$ and 
a $\mathcal{U}'$-saturated pair
$(S',T')$ such that $\mathcal{U}\subseteqq\mathcal{U}'$, 
$S\subseteqq S'$, and $T\subseteqq T'$.

\end{theorem}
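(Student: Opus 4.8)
The plan is to construct $(S',T')$ by a countable Lindenbaum-style iteration that repeatedly repairs violations of the four clauses of Definition~\ref{saturated_pair}, while keeping a reservoir of fresh variables to serve as the witnesses demanded by the right rule for $\forall$. The essential point is that the whole construction must stay inside the \emph{cut-free} system $\glsysc$, so the usual splitting ``decide each formula by putting it left or right'' is \emph{not} available, since it would require cutting on that formula. What rescues the argument is that each saturation clause is precisely the inversion of a left or right rule of $\glsysc$; hence the extension it prescribes can always be carried out while preserving finite consistency, without ever invoking \textbf{Cut}.

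I would first arrange the variables. As $\mathcal{U}$ is coinfinite, $\mathcal{V}\setminus\mathcal{U}$ is infinite, so I split it as $\mathcal{V}\setminus\mathcal{U}=\mathcal{Y}\sqcup\mathcal{Z}$ with $\mathcal{Y}$ and $\mathcal{Z}$ both infinite. All witnesses will be drawn from $\mathcal{Y}$, and I set $\mathcal{U}'=\mathcal{U}\cup\{z:z\text{ is used as a witness}\}$; since $\mathcal{U}'$ is disjoint from the infinite set $\mathcal{Z}$ it is coinfinite, and $\mathcal{U}\subseteqq\mathcal{U}'$ holds by construction. Because $\mathcal{U}\cup\mathcal{Y}$ is countable, $\Phi(\mathcal{U}\cup\mathcal{Y})$ is countable, and I would fix an enumeration of \emph{tasks}, each task recording a formula, a side ($S$ or $T$), and, for the $\forall$-left clause, a target variable; the enumeration is chosen so that every task is examined at infinitely many stages.

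The core is a one-step lemma: if $(S_n,T_n)$ is finitely consistent then any one saturation demand on it can be met without destroying finite consistency, and in every case this is witnessed by a single application of the matching rule of $\glsysc$ (possibly after weakening with \textbf{Set}). For the demand from $\forall x\phi\in T_n$ I would choose $z\in\mathcal{Y}$ not occurring in any formula of $S_n\cup T_n$ and claim $(S_n,T_n\cup\{\phi[z/x]\})$ is finitely consistent: were it not, $\gldbc\gm\yy\dl,\phi[z/x]$ for finite $\gm\subseteqq S_n$, $\dl\subseteqq T_n$ with $\forall x\phi\in\dl$, and since $z$ is fresh it meets the eigenvariable condition of the right rule for $\forall$, which then gives $\gldbc\gm\yy\dl$, contradicting finite consistency of $(S_n,T_n)$. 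The two disjunctive clauses are handled by merging derivations: if, for $\phi_1\land\phi_2\in T_n$, neither $T_n\cup\{\phi_1\}$ nor $T_n\cup\{\phi_2\}$ stayed consistent, I would weaken the two offending derivations to a common sequent $\gm\yy\dl$ and apply the right rule for $\land$ to derive $\gm\yy\dl$ with $\phi_1\land\phi_2\in\dl$, a contradiction; the clauses for $\neg$, for $\supset$, for $\land$ on the left, and for $\forall$ on the left are strictly analogous, each using inversion of the corresponding rule. Taking $(S',T')=\bigcup_n(S_n,T_n)$ then gives $S\subseteqq S'$ and $T\subseteqq T'$.

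It remains to verify that the limit is finitely consistent and $\mathcal{U}'$-saturated. Finite consistency is immediate, since any finite $\gm\subseteqq S'$ and $\dl\subseteqq T'$ already sit inside some $(S_n,T_n)$, which is finitely consistent by induction; the saturation clauses follow from the fairness of the enumeration, because each compound formula landing in $S'$ or $T'$ eventually has its task acted upon. I expect the only genuinely delicate point to be the $\forall$-left clause: it must be secured for \emph{every} variable of the set $\mathcal{U}'$, which is not known in advance and which keeps growing as new witnesses appear, so the bookkeeping must revisit each pair $(\forall x\phi,z)$ after both $\forall x\phi$ has entered $S'$ and $z$ has entered $\mathcal{U}'$. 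Once the step lemma is recognized as pure rule-inversion, so that \textbf{Cut} is never needed, the remaining work is routine dovetailing.
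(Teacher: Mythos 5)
Your proposal is correct and takes essentially the same route as the paper's proof: a fair enumeration driving a stagewise Lindenbaum construction, fresh witnesses drawn from a reserved infinite set of variables so that $\mathcal{U}'$ stays coinfinite, preservation of finite consistency at each step by cut-free rule inversion, and a union at the end. The paper merely asserts that the disjunctive choices (e.g.\ for $\phi_{1}\land\phi_{2}\in T_{n}$) can be made while keeping finite consistency, whereas you spell out the merging-of-derivations argument; apart from that added detail the two proofs coincide.
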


\begin{proof}
Take a coinfinite subset $\mathcal{W}$ of $\mathcal{V}$ such that 
$\mathcal{U}$ is a coinfinite subset of $\mathcal{W}$. 
Let 
$
(\phi_{n})_{n\in\mathbb{N}}
$ 
be a sequence of formulas of $\Phi(\mathcal{W})$ such that 
each formula of $\Phi(\mathcal{W})$ occurs infinitely many times in it. 
For example, if $(\gamma_{n})_{n\in\mathbb{N}}$ is an enumeration of 
all formulas of $\Phi(\mathcal{W})$, $(\phi_{n})_{n\in\mathbb{N}}$ could be 
$$
\gamma_{0},\ \gamma_{0},\gamma_{1},\ \gamma_{0},\gamma_{1},\gamma_{2},\ 
\gamma_{0},\gamma_{1},\gamma_{2},\gamma_{3},\cdots.
$$

\noindent
Define lists 
$(\mathcal{U}_{n})_{n\in\mathbb{N}}$ and 
$((S_{n},T_{n}))_{n\in\mathbb{N}}$ 
which satisfies the following:
\begin{enumerate}
\item
for every $n\in\mathbb{N}$, 
$\mathcal{U}_{n}$ is a coinfinite subset of $\mathcal{W}$ and 
$\mathcal{U}_{n}\subseteqq\mathcal{U}_{n+1}$;

\item
for every $n\in\mathbb{N}$, $(S_{n},T_{n})$ is a
finitely consistent pair of subsets of $\Phi(\mathcal{U}_{n})$, 
$S_{n}\subseteqq S_{n+1}$, and 
$T_{n}\subseteqq T_{n+1}$. 
\end{enumerate}

\noindent
First, let $\mathcal{U}_{0}=\mathcal{U}$ and  
$(S_{0},T_{0})=(S,T)$. 
Suppose $\mathcal{U}_{i}$ and $(S_{i},T_{i})$ are defined for every $i\leqq n$:

\begin{itemize}
\item
Case $\phi_{n}=\psi_{1}\land\psi_{2}$:
$\mathcal{U}_{n+1}=\mathcal{U}_{n}$. 
If $\psi_{1}\land\psi_{2}\in S_{n}$, 
then $S_{n+1}=S_{n}\cup\{\psi_{1},\psi_{2}\}$ and 
$T_{n+1}=T_{n}$. 
If $\psi_{1}\land\psi_{2}\in T_{n}$, 
then $S_{n+1}=S_{n}$ and  
define $T_{n+1}$ by $T_{n+1}=T_{n}\cup\{\psi_{1}\}$ or 
$T_{n+1}=T_{n}\cup\{\psi_{2}\}$,  
so that $(S_{n+1},T_{n+1})$ is finitely 
consistent.

\item
Case $\phi_{n}=\psi_{1}\supset\psi_{2}$:
$\mathcal{U}_{n+1}=\mathcal{U}_{n}$. 
If $\psi_{1}\supset\psi_{2}\in S_{n}$, 
then define $S_{n+1}$ and $T_{n+1}$ by 
$S_{n+1}=S_{n}$ and  
$T_{n+1}=T_{n}\cup\{\psi_{1}\}$, or 
$S_{n+1}=S_{n}\cup\{\psi_{2}\}$ and  
$T_{n+1}=T_{n}$, 
so that $(S_{n+1},T_{n+1})$ is finitely 
consistent. 
If $\psi_{1}\supset\psi_{2}\in T_{n}$, 
then $S_{n+1}=S_{n}\cup\{\psi_{1}\}$ and 
$T_{n+1}=T_{n}\cup\{\psi_{2}\}$.

\item
Case $\phi_{n}=\neg\psi$:
$\mathcal{U}_{n+1}=\mathcal{U}_{n}$. 
If $\neg\psi\in S_{n}$, 
then 
$S_{n+1}=S_{n}$ and  
$T_{n+1}=T_{n}\cup\{\psi\}$. 
If $\neg\psi\in T_{n}$, 
then $S_{n+1}=S_{n}\cup\{\psi\}$ and 
$T_{n+1}=T_{n}$.

\item
Case $\phi_{n}=\forall x\psi$:
If $\forall x\psi\in S_{n}$, 
then 
$\mathcal{U}_{n+1}=\mathcal{U}_{n}$, 
$S_{n+1}=S_{n}\cup
\{\psi[z/x]\mid z\in\mathcal{U}_{n}\}$,
and  
$T_{n+1}=T_{n}$. 
If $\forall x\psi\in T_{n}$, 
then 
$\mathcal{U}_{n+1}=\mathcal{U}_{n}\cup\{z\}$, 
where $z\in\mathcal{W}\setminus\mathcal{U}_{n}$, 
$S_{n+1}=S_{n}$, and 
$T_{n+1}=T_{n}\cup\{\psi[z/x]\}$.

\item
Otherwise, 
$\mathcal{U}_{n+1}=\mathcal{U}_{n}$ and  
$(S_{n},T_{n})=(S_{n+1},T_{n+1})$. 
\end{itemize}

\noindent
It is clear that the conditions 1 and 2 are satisfied. 
Now, Let 
$$
\mathcal{U}'=\bigcup_{n\in\mathbb{N}}\mathcal{U}_{n}, \ 
S'=\bigcup_{n\in\mathbb{N}}S_{n}, \ 
T'=\bigcup_{n\in\mathbb{N}}T_{n}.
$$

\noindent
Since each formula in $\Phi(\mathcal{W})$ occurs infinitely many times in 
the list $(\phi_{n})_{n\in\mathbb{N}}$, 
$\mathcal{U}'$ and 
$(S',T')$ satisfy
the first part of the 4th condition of Definition \ref{saturated_pair}. 
It is easy to check 
the other conditions are fulfilled. 
\end{proof}

\begin{theorem}\label{s-negbox}
Let $\mathcal{U}$ be a coinfinite subset of $\mathcal{V}$ and 
$(S,T)$ a $\mathcal{U}$-consistent $\propgl$-pair. 
If $\Box\phi\in T$, 
there exists
a coinfinite subset $\mathcal{U}'$ of $\mathcal{V}$ and 
a $\mathcal{U}'$-saturated $\propgl$-pair $(S',T')$ such that 
$\mathcal{U}\subseteqq\mathcal{U}'$, 
$\phi\in T'$, and 
$\Box^{-1}S\cup\Box\Box^{-1}S\subseteqq S'$. 
\end{theorem}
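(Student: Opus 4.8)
The plan is to feed a single candidate pair into Theorem~\ref{s-saturated} and then verify the extra $\propgl$-pair property by hand. Concretely, I would set
$(S_{0},T_{0})=(\Box^{-1}S\cup\Box\Box^{-1}S,\ \{\phi\})$ and aim to show two things about it: that it is a finitely consistent pair of subsets of $\Phi(\mathcal{U})$, and that its left component already contains a $\propgl$-marker. Since stripping or adding a $\Box$ does not change the variables occurring in a formula, and $\phi\in\Phi(\mathcal{U})$ (because $\Box\phi\in T\subseteqq\Phi(\mathcal{U})$ and $\variable(\Box\phi)=\variable(\phi)$), both components of $(S_{0},T_{0})$ lie in $\Phi(\mathcal{U})$, with $\mathcal{U}$ coinfinite by hypothesis. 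Granting finite consistency, Theorem~\ref{s-saturated} then supplies a coinfinite $\mathcal{U}'\supseteqq\mathcal{U}$ and a $\mathcal{U}'$-saturated pair $(S',T')$ with $S_{0}\subseteqq S'$ and $T_{0}\subseteqq T'$; in particular $\Box^{-1}S\cup\Box\Box^{-1}S\subseteqq S'$ and $\phi\in T'$, which are exactly the inclusions demanded by the statement.

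The substantive step is the finite consistency of $(S_{0},T_{0})$, which I would establish by contraposition against the finite consistency of $(S,T)$. Suppose $\gldbc A\yy\phi$ for some finite $A\subseteqq\Box^{-1}S\cup\Box\Box^{-1}S$. Partition $A$ by putting $\Box\gm:=A\cap\Box\Box^{-1}S$ (the members of $A$ that are themselves boxed formulas lying in $S$) and $\dl:=A\setminus\Box\gm$, so that $\dl\subseteqq\Box^{-1}S$. Reading $A\yy\phi$ as $\Box\gm,\dl\yy\phi$, a single application of the \textbf{Box} rule yields $\Box\gm,\Box\dl\yy\Box\phi$. Now every formula in $\Box\gm$ already lies in $S$ (since $\Box\Box^{-1}S\subseteqq S$), and every formula in $\Box\dl$ lies in $S$ as well, because $\delta\in\dl\subseteqq\Box^{-1}S$ means precisely $\Box\delta\in S$. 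Hence $\Box\gm\cup\Box\dl$ is a finite subset of $S$ while $\Box\phi\in T$, so the \textbf{Set} rule produces a cut-free derivation $\gldbc S''\yy T''$ with $S''\subseteqq S$ finite and $T''=\{\Box\phi\}\subseteqq T$, contradicting the finite consistency of $(S,T)$.

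It then remains to check that $(S',T')$ is a $\propgl$-pair. Because $(S,T)$ is a $\propgl$-pair, there is some $m\in\mathbb{N}$ with $\Box\neg\Diamond^{m}\top\in S$; this formula is of the form $\Box\psi$ with $\Box\psi\in S$, so $\Box\neg\Diamond^{m}\top\in\Box\Box^{-1}S\subseteqq S_{0}\subseteqq S'$, and consequently $(S',T')$ is again a $\propgl$-pair. Assembling the items above — coinfiniteness of $\mathcal{U}'$, the inclusion $\mathcal{U}\subseteqq\mathcal{U}'$, $\mathcal{U}'$-saturation, $\phi\in T'$, $\Box^{-1}S\cup\Box\Box^{-1}S\subseteqq S'$, and the $\propgl$-pair property — gives the conclusion.

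I expect the only delicate point to be the bookkeeping in the finite-consistency step: after the single use of \textbf{Box} one must verify that both parts of the antecedent fall back into $S$, the $\Box\gm$ part unchanged and the $\Box\dl$ part re-boxed out of $\Box^{-1}S$, so that weakening can close the loop against $(S,T)$. This is precisely why the candidate carries the layer $\Box\Box^{-1}S$ rather than merely $\Box^{-1}S$: that extra layer mirrors the transitivity principle $\Box p\supset\Box\Box p$ encoded in the \textbf{Box} rule, and it is what lets the $\propgl$-marker survive verbatim. As a remark feeding into the completeness construction of Section~\ref{completeness}, the companion inclusion $\Box^{-1}S\subseteqq S'$ moreover forces a strict descent of the marker: $\neg\Diamond^{m}\top\in S'$, and since $\neg\Diamond^{m}\top$ is (modulo the double negation absorbed by the saturation clauses for $\neg$) the formula $\Box\neg\Diamond^{m-1}\top$, the successor carries a strictly smaller marker, which is what ultimately enforces bounded length.
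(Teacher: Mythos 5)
Your proposal is correct and follows exactly the route of the paper's own proof: form the pair $(\Box^{-1}S\cup\Box\Box^{-1}S,\{\phi\})$, observe that it inherits finite consistency from $(S,T)$ and already contains the marker $\Box\neg\Diamond^{m}\top$ via $\Box\Box^{-1}S$, and then invoke Theorem~\ref{s-saturated}. The paper states the finite-consistency transfer in one line, whereas you spell out the \textbf{Box}-rule argument behind it; that added detail is accurate and is the right justification.
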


\begin{proof}
Since $(S,T)$ is finitely consistent, 
so is $(\Box^{-1}S\cup\Box\Box^{-1}S,\{\phi\})$. 
Since $(S,T)$ is a $\propgl$-pair, 
$\Box\neg\Diamond^{n}\top\in \Box\Box^{-1}S$ for some $n\in\mathbb{N}$. 
Now, by Theorem \ref{s-saturated}, 
there exists a coinfinite
subset $\mathcal{U}'$ of $\mathcal{V}$ and 
$\mathcal{U}'$-saturated pair $(S',T')$
such that 
$\mathcal{U}\subseteqq\mathcal{U}'$, 
$\phi\in T'$, and 
$\Box^{-1}S\cup\Box\Box^{-1}\subseteqq S'$. 
\end{proof}

\section{Kripke completeness of $\glsysc$}\label{completeness}

In this section, we show that the cut-free fragment $\glsysc$ of $\glsys$
is Kripke complete with respect to $\framebl$. 
The admissibility of the cut-rule follows from the completeness theorem 
and Theorem \ref{soundness}.

\begin{theorem}\label{s-rs}
If $\not\gldbc\gm\yy\dl$, there 
exists a coinfinite subset $\mathcal{U}$ of $\mathcal{V}$ and 
a $\mathcal{U}$-saturated $\propgl$-pair 
$(S,T)$ such that $\gm\subseteqq S$ and $\dl\subseteqq T$.  
\end{theorem}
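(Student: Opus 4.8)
The plan is to reduce the statement to Theorem~\ref{s-saturated} by first extracting, from the underivability of $\gm\yy\dl$, a single variable-free formula of the form $\Box\neg\Diamond^{n}\top$ that can be placed consistently on the left, thereby upgrading the starting pair to a $\propgl$-pair \emph{before} saturating. The crux of the whole argument is recognizing that the non-compact \textbf{Boundedness of length} rule is precisely what supplies this witness; once it is in hand, the saturation is entirely off-the-shelf from Theorem~\ref{s-saturated}.

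First I would apply the contrapositive of the \textbf{Boundedness of length} rule. Since its conclusion $\gm\yy\dl$ is not cut-free derivable, its premises cannot all be derivable, for otherwise a single application of the rule would derive $\gm\yy\dl$. Hence there is some $m\in\mathbb{N}$ with $\not\gldbc\gm\yy\dl,\Diamond^{m}\top$. Because $\Diamond^{0}\top=\top$ and $\gm\yy\dl,\top$ is always derivable from the axiom $\yy\top$ via the \textbf{Set} rule, we must have $m\geqq 1$; write $m=n+1$. Next I would move this formula into the antecedent: recalling that $\Diamond$ abbreviates $\neg\Box\neg$, the formula $\Diamond^{n+1}\top$ is literally $\neg(\Box\neg\Diamond^{n}\top)$, so the right-hand \textbf{Negation} rule derives $\gm\yy\dl,\Diamond^{n+1}\top$ in one step from $\Box\neg\Diamond^{n}\top,\gm\yy\dl$. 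Contraposing this derivation, $\not\gldbc\gm\yy\dl,\Diamond^{n+1}\top$ yields $\not\gldbc\gm,\Box\neg\Diamond^{n}\top\yy\dl$. Since $\gm$ and $\dl$ are finite, this shows that the pair $(\gm\cup\{\Box\neg\Diamond^{n}\top\},\dl)$ is finitely consistent, and since $\Box\neg\Diamond^{n}\top$ contains no variables, it consists of subsets of $\Phi(\mathcal{U}_{0})$ for any coinfinite $\mathcal{U}_{0}$ with $\variable(\gm)\cup\variable(\dl)\subseteqq\mathcal{U}_{0}$.

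Finally I would invoke Theorem~\ref{s-saturated} on this finitely consistent pair to obtain a coinfinite $\mathcal{U}\supseteqq\mathcal{U}_{0}$ and a $\mathcal{U}$-saturated pair $(S,T)$ with $\gm\cup\{\Box\neg\Diamond^{n}\top\}\subseteqq S$ and $\dl\subseteqq T$. Then $\gm\subseteqq S$, $\dl\subseteqq T$, and since $\Box\neg\Diamond^{n}\top\in S$ the pair $(S,T)$ is a $\propgl$-pair, which completes the argument. I do not expect any genuine obstacle here: the propositional and quantifier saturation conditions are already discharged by Theorem~\ref{s-saturated}, and the only point requiring care is the index bookkeeping in the step above, namely ensuring $m\geqq 1$ so that the witness is of the form $\Box\neg\Diamond^{n}\top$ rather than the vacuous $\neg\top$.
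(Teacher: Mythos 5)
Your proposal is correct and follows essentially the same route as the paper: both use the non-compact \textbf{Boundedness of length} rule to find an $n$ with $\not\gldbc\Box\neg\Diamond^{n}\top,\gm\yy\dl$ and then feed the pair $(\gm\cup\{\Box\neg\Diamond^{n}\top\},\dl)$ into Theorem~\ref{s-saturated}. The paper states the existence of such an $n$ in one line, whereas you carefully spell out the contrapositive of the rule, the exclusion of $m=0$, and the \textbf{Negation}-rule step moving $\Diamond^{n+1}\top$ to the antecedent; these details are exactly the right ones.
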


\begin{proof}
By the rule of boundedness, there exists $n\in\mathbb{N}$ 
such that
$$
\not\gldbc \Box\neg\Diamond^{n}\top,\gm\yy\dl.
$$
Apply Theorem \ref{s-saturated} to 
$\variable(\gm\cup\dl)$ and 
$(\{\Box\neg\Diamond^{n}\top\}\cup\gm,\dl)$. 
\end{proof}

\begin{theorem}
(Kripke completeness of $\glsysc$).
A formula $\phi$
is derivable in $\glsysc$
if and only if 
$\framebl\vl\phi$. 
\end{theorem}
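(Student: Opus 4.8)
The plan is to prove the two directions of the biconditional separately. The easy direction, from derivability to validity, is just soundness: if $\gldbc\phi$ then $\gldb\phi$ (since $\glsysc$ is a fragment of $\glsys$), and Theorem~\ref{soundness} gives $\framebl\vl\phi$. So the whole content lies in the contrapositive of the converse: assuming $\not\gldbc{}\yy\phi$, I must build a Kripke model over a frame in $\framebl$ that refutes $\phi$ at some world.

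First I would apply Theorem~\ref{s-rs} to the underivable sequent $\yy\phi$, obtaining a coinfinite $\mathcal{U}$ and a $\mathcal{U}$-saturated $\propgl$-pair $(S,T)$ with $\phi\in T$. This pair will serve as the root world of the model to be constructed. The key construction is a canonical-style model whose worlds are $\mathcal{U}$-saturated $\propgl$-pairs, where the variables in $\mathcal{U}$ double as the elements of the domain at that world. The accessibility relation is the standard box-driven one: from a world $(S,T)$ I can reach any saturated $\propgl$-pair $(S',T')$ obtained, via Theorem~\ref{s-negbox}, by choosing some $\Box\psi\in T$ and passing to a saturated extension of $(\Box^{-1}S\cup\Box\Box^{-1}S,\{\psi\})$. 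The interpretation at a world sends each predicate $P$ to the tuples of variables $\vec z$ with $P(\vec z)\in S$. I expect the coinfiniteness bookkeeping in Theorems~\ref{s-saturated} and~\ref{s-negbox} to guarantee that fresh witnesses for existential (i.e.\ $\forall\in T$) statements are always available, and the $\Box\Box^{-1}S\subseteqq S'$ clause to secure transitivity of $R$.

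The heart of the argument is the Truth Lemma: for every world $(S,T)$ and every closed formula $\psi\in\Phi(\mathcal{U})$, one has $\psi\in S$ implies $\kmodel{M},(S,T)\vl\psi$ and $\psi\in T$ implies $\kmodel{M},(S,T)\not\vl\psi$. This goes by induction on the structure of $\psi$, with the propositional, negation, and quantifier cases handled directly by the four saturation conditions of Definition~\ref{saturated_pair}. The only delicate clauses are the modal ones. For $\Box\psi\in S$ the box rule and the inclusion $\Box^{-1}S\subseteqq S'$ push $\psi$ into the $S'$-part of every successor, so the induction hypothesis yields forcing at all successors; for $\Box\psi\in T$, Theorem~\ref{s-negbox} produces a successor world placing $\psi$ in its $T$-part, refuting $\Box\psi$ at $(S,T)$. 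Applying the lemma to the root, where $\phi\in T$, gives $\kmodel{M},(S,T)\not\vl\phi$.

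The main obstacle, and the reason the $\propgl$-pair condition is built into everything, is showing that the underlying frame actually lies in $\framebl$, i.e.\ is transitive and of bounded length. Transitivity follows from the $\Box\Box^{-1}S\subseteqq S'$ clause. For bounded length I would invoke Lemma~\ref{boundedness}: it suffices that every world forces $\neg\Diamond^{n}\top$ for some $n$. Since each world is a $\propgl$-pair, it contains $\Box\neg\Diamond^{m}\top$ for some $m$; the box clause of the Truth Lemma then forces $\neg\Diamond^{m}\top$ at every successor, and a short argument shows the $\Diamond$-depth strictly decreases along $R$, so no world admits an infinite ascending $R$-chain and the supremum of path lengths from any world is finite. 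Verifying that this decrease is genuine, and that the freshly constructed successors inherit the $\propgl$-pair property with a strictly smaller parameter, is the one point where I would take care rather than rely on routine saturation bookkeeping.
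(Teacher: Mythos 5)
Your proposal is correct and follows essentially the same route as the paper: a canonical model whose worlds are $\mathcal{U}$-saturated $\propgl$-pairs with domains given by the variable sets, the inclusion-based accessibility relation $\Box^{-1}S\cup\Box\Box^{-1}S\subseteqq S'$ for transitivity, a truth lemma proved by induction using Theorems~\ref{s-rs} and~\ref{s-negbox}, and Lemma~\ref{boundedness} together with the $\propgl$-pair condition to place the frame in $\framebl$. The only cosmetic difference is that the paper gets bounded length directly from each world forcing $\Box\neg\Diamond^{n}\top$ (hence $\neg\Diamond^{n+1}\top$) rather than via your strictly-decreasing-depth argument, and the inheritance of the $\propgl$-pair property you flag is already secured in Theorem~\ref{s-negbox} since $\Box\neg\Diamond^{n}\top\in\Box\Box^{-1}S$.
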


\begin{proof}
We only show the if-part. 
Define a model 
$
\kmodel{M}
=
(W,R,D,I)
$ 
as follows:
\begin{itemize}
\item
$W$ is the set of all triples $(\mathcal{U},S,T)$, where 
$\mathcal{U}$ is a coinfinite subset of $\mathcal{V}$ and 
$(S,T)$ is a $\mathcal{U}$-saturated $\propgl$-pair.

\item
For any $(\mathcal{U},S,T)$ and  
$(\mathcal{U}',S',T')$ in $W$, 
$$
((\mathcal{U},S,T),(\mathcal{U}',S',T'))\in R
\eq
\mbox{
$\mathcal{U}\subseteqq\mathcal{U}'$ and 
$\Box^{-1}S\cup\Box\Box^{-1}S\subseteqq S'$. 
}$$

\item
For any $(\mathcal{U},S,T)\in W$,  
$D_{(\mathcal{U},S,T)}=\mathcal{U}$. 

\item
For any $(\mathcal{U},S,T)\in W$ and any predicate symbol $P$ of arity $n$,
$$
I((\mathcal{U},S,T),P)=
\{
(x_{1},\ldots,x_{n})\in\mathcal{V}^{n}\mid
P(x_{1},\ldots,x_{n})\in S
\}. 
$$
\end{itemize}

\noindent
By definition of $R$,  the frame $(W,R)$ is transitive. 
We claim that 
for any formula $\phi$ and $(\mathcal{U},S,T)\in W$,
$$
\phi\in S
\thn 
\kmodel{M},(\mathcal{U},S,T)\vl\phi,\hspace{10pt}
\phi\in T
\thn
\kmodel{M},(\mathcal{U},S,T)\not\vl\phi
. 
$$
We show the claim only for the cases of $\phi=P(x_{1},\ldots,x_{n})$,  $\forall x\psi(x)$, 
and $\Box\psi$:
\begin{itemize}

\item
Case $\phi=P(x_{1},\ldots,x_{n})$: 
By definitions of $I$ and $\vl$, 
\begin{eqnarray*}
P(x_{1},\ldots,x_{n})\in S
&\eq&
(x_{1},\ldots,x_{n})\in I((\mathcal{U},S,T),P)\\
&\eq&
\kmodel{M},(\mathcal{U},S,T)\vl P(x_{1},\ldots,x_{n}). 
\end{eqnarray*} 
Since $(S,T)$ is finitely consistent, 
\begin{eqnarray*}
P(x_{1},\ldots,x_{n})\in T
&\thn&
P(x_{1},\ldots,x_{n})\not\in S\\
&\eq&
(x_{1},\ldots,x_{n})\not\in I((\mathcal{U},S,T),P)\\
&\eq&
\kmodel{M},(\mathcal{U},S,T)\not\vl P(x_{1},\ldots,x_{n}). 
\end{eqnarray*}

\item
Case $\phi=\forall x\psi(x)$:
If
$\forall x\psi(x)\in S$,  
then
$\psi(z)\in S$ for any $z\in \mathcal{U}$, 
since 
$(S,T)$ is $\mathcal{U}$-saturated. 
Hence, 
by induction hypothesis, 
$\kmodel{M},(\mathcal{U},S,T)\vl \psi(z)$ 
for any $u\in D_{(\mathcal{U},S,T)}$. 
If
$
\forall x\psi(x)\in T,  
$
then, 
$\psi(z)\in T$ for some $z\in \mathcal{U}$,  
since 
$(S,T)$ is $\mathcal{U}$-saturated. 
By induction hypothesis, 
$\kmodel{M},(\mathcal{U},S,T)\not\vl \psi(z)$
for some $z\in D_{(\mathcal{U},S,T)}$.

\item
Case $\phi=\Box\psi$:
Suppose $\Box\psi\in S$ and $((\mathcal{U},S,T),(\mathcal{U}',S',T'))\in R$. 
Then, 
$\psi\in S'$ 
by definition of $R$.  
By induction hypothesis, 
$\kmodel{M},(\mathcal{U}',S',T')\vl \psi$.
Suppose $\Box\psi\in T$. 
Then, 
by Theorem \ref{s-negbox}, 
there exists a coinfinite subset $\mathcal{U}'$ of $\mathcal{V}$ and 
a $\mathcal{U}'$-saturated $\propgl$-pair $(S',T')$ such that 
$\mathcal{U}\subseteqq\mathcal{U}'$, 
$\phi\in T'$, and 
$\Box^{-1}S\cup\Box\Box^{-1}\subseteqq S'$. 
Then, $(\mathcal{U}',S',T')\in W$,  $((\mathcal{U},S,T),(\mathcal{U}',S',T'))\in R$, 
and, by induction hypothesis,
$\kmodel{M},(\mathcal{U}',S',T')\not\vl \psi$.  
\end{itemize}

\noindent
This complete the proof of the claim. By using the claim and 
Lemma \ref{boundedness},  $(W,R)\in\framebl$. 
Now, suppose $\not\gldbc\gm\yy\dl$. Then, by Theorem \ref{s-rs},  
there exists $(\mathcal{U},S,T)\in W$ such that $\gm\subseteqq S$ and $\dl\subseteqq T$. 
Hence, $\kmodel{M},(\mathcal{U},S,T)\not\vl\gm\yy\dl$. 
\end{proof}

\newcommand{\noop}[1]{}

\end{document}